\newtheorem{theorem}{Theorem}[section]
\newtheorem{lemma}[theorem]{Lemma}
\theoremstyle{definition}
\newtheorem{remark}[theorem]{Remark}
\numberwithin{equation}{section}
\begin{document}
\Large
\title[Maps completely preserving involution]
{Maps completely preserving involution}

\author[Ali Taghavi and Roja Hosseinzadeh]{Ali Taghavi and Roja
Hosseinzadeh}

\address{{ Department of Mathematics, Faculty of Basic Sciences,
 University of Mazandaran, P. O. Box 47416-1468, Babolsar, Iran.}}

\email{taghavi@umz.ac.ir,  ro.hosseinzadeh@umz.ac.ir}

\subjclass[2000]{46J10, 47B48}

\keywords{ standard operator algebra; Completely preserving map; Involution.}

\begin{abstract}\large
Let $X$ and $Y$ be Banach spaces
and $\mathcal{A} \subseteq B(X)$, $\mathcal{B} \subseteq B(Y)$ be
standard operator algebras with $ \dim X \geq 3$. We characterize the form of maps from $\mathcal{A}$ onto $\mathcal{B}$ such that
completely preserve involution.
\end{abstract} \maketitle

\section{Introduction And Statement of the Results}
\noindent The study of maps on operator algebras
preserving certain properties or subsets is a topic which
attracts much attention of many authors.  See for example the introduction to this topic in the paper $[12]$ and the references within and also $[1,8,11,13,14]$. Recently, some of these problems
are concerned with completely preserving of certain properties or subsets of operators. For example see $[3-7,9,10]$.
In $[6]$, authors characterized completely rank-nonincreasing linear maps and then later extended this results in $[7]$.
Completely invertibility preserving maps were characterized in $[3,4,9]$. Subsequently, author in $[10]$ characterized completely idempotent and completely square-zero preserving maps. In this paper, we want to characterize the forms of
completely involution preserving maps between standard operator
algebras. Let $B(X)$ be the algebra of all
bounded linear operators on the Banach space $X$. Recall that a standard operator
algebra on $X$ is a norm closed subalgebra of $B(X)$ which
contains the identity and all finite-rank operators. Let $\mathcal{A} \subseteq B(X)$ be a
standard operator algebra. Denote $ \mathcal{P} =\{ A; A \in \mathcal{A}, A^2=A \}$, $ \Gamma =\{ A; A \in \mathcal{A}, A^2=I \}$. $A$ is involution whenever $A \in \Gamma $. Let $ \phi : \mathcal{M} \rightarrow \mathcal{N}$ be a map, where $ \mathcal{M}$ and $ \mathcal{N}$ are linear spaces. Define for each $n \in \mathbb{N} $, a map $ \phi _n : \mathcal{M} \otimes M_n \rightarrow \mathcal{N} \otimes M_n$ by
$$ \phi _n ((a_{ij})_{n \times n})=( \phi (a_{ij}))_{n \times n}.$$
Then $ \phi $ is said to be $n$-involution preserving if $ \phi _n $ preserves involution. $ \phi $ is said to be completely involution preserving if $ \phi _n $ preserves involution for each $n \in \mathbb{N} $. Our main
result is as follows.
\par \vspace{.3cm}
\textbf{Main Theorem } Let $X$ and $Y$ be infinite dimensional Banach spaces
and $\mathcal{A} \subseteq B(X)$, $\mathcal{B} \subseteq B(Y)$ be
standard operator algebras with $ \dim X \geq 3$. Suppose that $\phi:\mathcal{A}
\rightarrow \mathcal{B}$ is a surjective map. Then the following statements are equivalent: \\
$(1)$ $\phi$ is completely involution preserving in both directions. \\
$(2)$ $\phi$ is 2-involution preserving in both directions. \\
$(3)$ There exists a bijective bounded linear operator $A:X\longrightarrow Y$ such that
$$\phi(T)= \lambda _TATA^{-1} \hspace{.4cm} (T \in \mathcal{A}),$$
where $ \lambda _T=1$ or $-1$.

\section{Proofs}
In this section we prove our results. First we recall some
notations. We denote by $ \mathcal{P}_1( \mathcal{X})$ the set
of all rank-1 idempotent operators in $B(X)$. Let $X^ \prime$
denote the dual space of $X$. For every nonzero $x\in X$ and
$f\in X^ \prime$, the symbol $x\otimes f$ stands for the rank-1
linear operator on $X$ defined by $(x\otimes f)y=f(y)x$ for any
$y\in X$. Note that every rank-1 operator in $B(X)$ can be
written in this way. The rank-1 operator $x\otimes f$ is
idempotent if and only if $f(x)=1$. Given $P,Q\in \mathcal{P}$,  we say $P<Q$ if $PQ=QP=P$ and
$P\neq Q$. In addition, we say that $P$ and $Q$ are orthogonal if
$PQ=QP=0$. \par We need some
auxiliary lemmas to prove our main results. In order to prove main Theorem, it is enough only to prove $(2) \Rightarrow (3)$. So let $\mathcal{A} \subseteq B(X)$, $\mathcal{B} \subseteq B(Y)$ be standard operator algebras on infinite dimensional Banach spaces $X$ and $Y$, respectively. Suppose that $\phi:\mathcal{A}
\rightarrow \mathcal{B}$ is a surjective map such that $ \phi _2$ is an involution preserving map.
\begin{lemma} $\phi (I)=I$ or $\phi (I)=-I$ and $\phi (0)=0$.
\end{lemma}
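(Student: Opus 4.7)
The plan is to apply $\phi_2$ to a one-parameter family of $2\times 2$ involutions and extract algebraic identities that, combined with surjectivity of $\phi$ and the triviality of the centre of a standard operator algebra, pin down $\phi(0)$ and $\phi(I)$. For each $X \in \mathcal{A}$, the matrix
\[
M_X = \begin{pmatrix} I & X \\ 0 & -I \end{pmatrix}
\]
lies in $\mathcal{A} \otimes M_2$ and satisfies $M_X^{\,2} = I_2$, so $\phi_2(M_X) = \begin{pmatrix} \phi(I) & \phi(X) \\ \phi(0) & \phi(-I) \end{pmatrix}$ is an involution in $\mathcal{B} \otimes M_2$. Expanding the four blocks of $\phi_2(M_X)^2 = I_2$ produces, in particular,
\[
\phi(I)^2 + \phi(X)\phi(0) = I \quad\text{and}\quad \phi(I)\phi(X) + \phi(X)\phi(-I) = 0,
\]
together with symmetric identities from the lower row.

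The first identity says that $\phi(X)\phi(0)$ is independent of $X$; by surjectivity of $\phi$ this means $B\,\phi(0)$ is the same element of $\mathcal{B}$ for every $B \in \mathcal{B}$, and comparing $B=0$ with $B=I$ at once forces $\phi(0) = 0$. Substituting back gives $\phi(I)^2 = I$, and the analogous $(2,2)$ identity yields $\phi(-I)^2 = I$. Setting $X = I$ in the second identity then gives $I + \phi(I)\phi(-I) = 0$, so $\phi(-I) = -\phi(I)$, and the second identity collapses to $\phi(I)\phi(X) = \phi(X)\phi(I)$ for every $X$. Thus $\phi(I)$ is central in $\mathcal{B}$; since $\mathcal{B}$ contains every rank-one operator $y \otimes g$ on the infinite-dimensional space $Y$, the standard commutation argument (an operator $Z$ commuting with every $y \otimes g$ must send every $y$ into its span, whence $Z$ is a scalar) shows the centre of $\mathcal{B}$ is $\mathbb{C}I$, and $\phi(I)^2 = I$ then forces $\phi(I) = \pm I$.

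I expect the main hurdle to be spotting the right matrix $M_X$: it must be an involution for \emph{every} $X \in \mathcal{A}$ (so that letting $X$ vary and invoking surjectivity produces a statement about all of $\mathcal{B}$) while simultaneously placing $\phi(0)$ and $\phi(X)$ in positions whose product appears in a diagonal block of $\phi_2(M_X)^2$. Once $M_X$ is in hand, the remainder of the proof is bookkeeping with the four block equations.
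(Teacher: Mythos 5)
Your proposal is correct and follows essentially the same route as the paper: the same involution $\begin{pmatrix} I & X \\ 0 & -I \end{pmatrix}$, the same two block identities, and the same use of surjectivity plus triviality of the centre to conclude $\phi(I)=\pm I$ and $\phi(0)=0$. The only difference is cosmetic ordering (you extract $\phi(0)=0$ and $\phi(I)^2=I$ directly from the $(1,1)$ block before handling $\phi(-I)$, whereas the paper first derives $\phi(-I)=-\phi(I)$), which changes nothing of substance.
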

\begin{proof} For any $T \in \mathcal{A}$,
$$ \begin{pmatrix}
     I & T \\
     0 & -I \\
   \end{pmatrix} \in \Gamma .$$
Thus $$ \phi _2(\begin{pmatrix}
     I & T \\
     0 & -I \\
   \end{pmatrix})= \begin{pmatrix}
     \phi (I) & \phi (T) \\
     \phi (0) & \phi (-I) \\
   \end{pmatrix} \in \Gamma $$
   So we obtain
   $$ \phi (I)^2+ \phi (T) \phi (0)=I \leqno(2.1)$$
   $$ \phi(I) \phi (T)+ \phi(T) \phi (-I)=0. \leqno(2.2)$$

Let $\phi (T_0)=I$. Taking $T=T_0$, Equation $(2.2)$ yields that $- \phi (I)= \phi (-I)$. This together with Equation $(2.2)$ yields that $ \phi(I) \phi (T)= \phi(T) \phi (I)$. Since $\phi$ is surjective, from previous equation can be concluded that $ \phi(I)$ commute with any element of $\mathcal{B}$ which implies that $ \phi (I)= \lambda I$ for a complex number $ \lambda$. Let $\phi (T_1)=0$. Taking $T=T_1$, Equation $(2.1)$ yields that $ \phi (I)^2= I$. Therefore $ \lambda ^2=1$ and so $\phi (I)=I$ or $\phi (I)=-I$. This together with Equation $(2.1)$ yields that $ \phi (T) \phi (0)=0$ for all $T \in \mathcal{A}$. Thus $ \phi (0)=0$.
\end{proof}
\begin{lemma} $\phi$ is injective.
\end{lemma}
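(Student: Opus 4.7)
The plan is to exhibit a single matrix $M\in\Gamma$ built from $S$ with the property that replacing just one occurrence of $S$ by $T$ produces a matrix $M'$ which lies in $\Gamma$ only when $T=S$. Granted such an $M$, injectivity is immediate: if $\phi(S)=\phi(T)$, then $\phi_2(M)$ and $\phi_2(M')$ agree entry by entry (only the perturbed slot differs, and there $\phi(S)=\phi(T)$), so from $M\in\Gamma$ and the forward direction of the hypothesis one has $\phi_2(M')=\phi_2(M)\in\Gamma$, and then the backward direction of the hypothesis on $\phi_2$ gives $M'\in\Gamma$, forcing $T=S$.

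The concrete choice is
\[
M \;=\; \begin{pmatrix} S & I \\ I-S^{2} & -S \end{pmatrix},
\]
which lies in $\Gamma$ for every $S\in\mathcal{A}$: a direct block multiplication shows that the two diagonal blocks of $M^{2}$ collapse to $S^{2}+(I-S^{2})=I$ and $(I-S^{2})\cdot I+S^{2}=I$, while the off-diagonal blocks read $S-S=0$ and $(I-S^{2})S-S(I-S^{2})=0$. Perturbing only the $(1,1)$-slot gives
\[
M' \;=\; \begin{pmatrix} T & I \\ I-S^{2} & -S \end{pmatrix},
\]
whose square has $(1,2)$-entry equal to $T\cdot I+I\cdot(-S)=T-S$. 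Since $M'\in\Gamma$ forces this entry to vanish, $T=S$, which is the conclusion of the lemma. (The other three block-identities of $(M')^{2}=I$ are then automatically satisfied.)

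The main obstacle is simply guessing the matrix $M$. It must lie in $\Gamma$ for \emph{every} $S$ (otherwise $\phi_2$ carries no usable information through it), yet be rigid enough that the single-entry perturbation $M'$ is in $\Gamma$ only when $T=S$. The obvious candidates fail: $\begin{pmatrix} I & S \\ 0 & -I \end{pmatrix}$ is too symmetric (both $M$ and $M'$ lie in $\Gamma$ for \emph{every} $S$ and $T$, giving no information), while $\begin{pmatrix} 0 & S \\ S^{-1} & 0 \end{pmatrix}$ handles only invertible $S$. The trick above succeeds because $S$ appears simultaneously on the diagonal and, packaged as $I-S^{2}$, off the diagonal, so the cancellations that make $M^{2}=I$ depend on an alignment that breaks the instant $T$ is substituted in one spot alone.
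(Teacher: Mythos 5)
Your proposal is correct and is essentially the paper's own argument: the paper uses the matrix $\left(\begin{smallmatrix} T & -I \\ T^2-I & -T \end{smallmatrix}\right)\in\Gamma$, replaces the $(1,1)$ slot by $S$ after applying $\phi_2$ forward and then backward, and reads off $T=S$ from the $(1,2)$ block of the square, exactly as you do with the sign-flipped matrix $\left(\begin{smallmatrix} S & I \\ I-S^2 & -S \end{smallmatrix}\right)$. Your explicit appeal to the ``both directions'' hypothesis in the backward step is in fact slightly more careful than the paper's wording.
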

\begin{proof} Let $T,S \in \mathcal{A}$ such that $ \phi (T)= \phi (S)$. So we have
\begin{eqnarray*}
 \begin{pmatrix}
     T & -I \\
     T^2-I & -T \\
   \end{pmatrix} \in \Gamma & \Rightarrow &
    \begin{pmatrix}
     \phi (T) & \phi (-I) \\
     \phi (T^2-I) & \phi (-T) \\
   \end{pmatrix} \in \Gamma \\ & \Rightarrow &
   \begin{pmatrix}
     \phi (S) & \phi (-I) \\
     \phi (T^2-I) & \phi (-T) \\
   \end{pmatrix} \in \Gamma \\ & \Rightarrow &
   \begin{pmatrix}
     S & -I \\
     T^2-I & -T \\
   \end{pmatrix} \in \Gamma
\end{eqnarray*}
which implies that $T=S$ and this completes the proof.
\end{proof}
\begin{lemma} For any $T,S \in \mathcal{A}$ we have $ \phi (T) \phi (S)= \phi (S) \phi (T)=0$ if $TS=ST=0$.
\end{lemma}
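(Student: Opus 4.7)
The plan is to exhibit a simple $2\times 2$ involution whose entries involve $T$ and $S$, then apply $\phi_2$ and read off the two desired relations from the $(1,1)$ and $(2,2)$ blocks of the square.

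\textbf{Step 1: The right matrix.} I would try
$$M=\begin{pmatrix} I & T \\ S & -I \end{pmatrix}.$$
A direct multiplication gives
$$M^{2}=\begin{pmatrix} I+TS & IT+T(-I)\\ SI+(-I)S & ST+I \end{pmatrix}=\begin{pmatrix} I+TS & 0\\ 0 & I+ST \end{pmatrix},$$
so under the hypothesis $TS=ST=0$ we have $M^{2}=I$, i.e. $M\in\Gamma$.

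\textbf{Step 2: Push through $\phi_2$.} Since $\phi_2$ preserves involutions,
$$\phi_{2}(M)=\begin{pmatrix} \phi(I) & \phi(T)\\ \phi(S) & \phi(-I) \end{pmatrix}\in\Gamma.$$
By Lemma~2.1 (and the observation made inside its proof that $\phi(-I)=-\phi(I)$, obtained there by taking $T=T_{0}$ in $(2.2)$), we may write $\phi(I)=\varepsilon I$ and $\phi(-I)=-\varepsilon I$ with $\varepsilon=\pm 1$. Substituting,
$$\phi_{2}(M)=\begin{pmatrix} \varepsilon I & \phi(T)\\ \phi(S) & -\varepsilon I \end{pmatrix}.$$

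\textbf{Step 3: Read off the conclusion.} Squaring, the $(1,1)$ block of $\phi_{2}(M)^{2}=I$ is $\varepsilon^{2}I+\phi(T)\phi(S)=I+\phi(T)\phi(S)$, forcing $\phi(T)\phi(S)=0$. Likewise the $(2,2)$ block is $\phi(S)\phi(T)+\varepsilon^{2}I=I+\phi(S)\phi(T)$, giving $\phi(S)\phi(T)=0$. (The off-diagonal blocks are $\varepsilon\phi(T)-\varepsilon\phi(T)=0$ and $\varepsilon\phi(S)-\varepsilon\phi(S)=0$, hence yield no extra constraint.)

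\textbf{Main obstacle.} The only real step is spotting the matrix $M$; everything after that is mechanical. The reason this matrix is the natural choice is that putting $\pm I$ on the diagonal makes the off-diagonal entries of $M^{2}$ vanish identically, so the sole constraint reduces to $TS=ST=0$, which is exactly our hypothesis. Note we use only the forward implication "$\phi_{2}$ sends $\Gamma$ into $\Gamma$", so the "both directions" hypothesis is not needed here.
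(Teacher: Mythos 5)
Your proof is correct and uses exactly the same matrix $\begin{pmatrix} I & T \\ S & -I \end{pmatrix}$ and the same reduction via Lemma 2.1 as the paper's own argument; you have merely written out the block computation that the paper leaves implicit. No further comment is needed.
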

\begin{proof} We have $TS=ST=0$ if and only if
  $ \begin{pmatrix}
     I & T \\
     S & -I \\
   \end{pmatrix} \in \Gamma $.
   This together with Lemma 2.1 and the preserving property of $ \phi _2$ yields
   $$TS=ST=0 \Rightarrow \phi (T) \phi (S)= \phi (S) \phi (T)=0.$$
\end{proof}
\par Next assume that $\phi (I)=I$. We may replace $\phi$ by $ -\phi $ if $\phi (I)=-I$
\begin{lemma} $\phi$ preserves the idempotent operators.
\end{lemma}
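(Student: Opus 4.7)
The plan is to construct a single $2\times 2$ block matrix whose involution property is equivalent to $P^2=P$, push it through $\phi_2$, and combine the resulting identities with Lemma 2.3 to force $\phi(P)^2=\phi(P)$.

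Concretely, I would first verify by direct multiplication that whenever $P\in\mathcal{P}$, the matrix
$$M=\begin{pmatrix} P & I \\ I-P & -P \end{pmatrix}$$
satisfies $M^2=I$ and hence lies in $\Gamma$. Applying $\phi_2$ and invoking $\phi(0)=0$, $\phi(I)=I$ from Lemma 2.1, the image
$$\phi_2(M)=\begin{pmatrix} \phi(P) & I \\ \phi(I-P) & \phi(-P) \end{pmatrix}$$
must also lie in $\Gamma$. Reading off the $(1,1)$ and $(1,2)$ blocks of $\phi_2(M)^2=I$ gives
$$\phi(I-P)=I-\phi(P)^2,\qquad \phi(-P)=-\phi(P).$$

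Since $I-P$ is itself idempotent, I would repeat the construction with $I-P$ in place of $P$ to obtain the symmetric identity $\phi(P)=I-\phi(I-P)^2$; substituting the first relation yields
$$\phi(P)=2\phi(P)^2-\phi(P)^4.$$
In parallel, since $P(I-P)=(I-P)P=0$, Lemma 2.3 provides $\phi(P)\phi(I-P)=0$; combined with $\phi(I-P)=I-\phi(P)^2$ this forces $\phi(P)=\phi(P)^3$, so $\phi(P)^4=\phi(P)^2$. Feeding this back into the previous identity collapses it to $\phi(P)^2=\phi(P)$, as required.

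The main obstacle is the first stage: locating a $2\times 2$ involution in which $P$ appears in a diagonal block, so that $\phi_2$ yields algebraic constraints on $\phi(P)$ itself rather than on some auxiliary operator such as $\phi(2P-I)$ or $\phi(2P)$ whose relation to $\phi(P)$ is a priori unknown. Once the right $M$ is chosen, the remainder is elementary manipulation. For the reverse implication (that $\phi(P)\in\mathcal{P}$ forces $P\in\mathcal{P}$), the hypothesis that $\phi_2$ preserves involutions in both directions means $\phi^{-1}\colon\mathcal{B}\to\mathcal{A}$ satisfies the same assumptions, and the same argument applies verbatim.
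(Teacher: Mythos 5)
Your proof is correct and follows essentially the same route as the paper: the same involution $\begin{pmatrix} P & I \\ I-P & -P \end{pmatrix}$ (the paper's matrix $\begin{pmatrix} T & I \\ I-T^2 & -T\end{pmatrix}$ specialized to an idempotent), the same use of Lemma 2.1 and the orthogonality of $P$ and $I-P$ via Lemma 2.3, and the same exploitation of the symmetry between $P$ and $I-P$. The paper's algebraic endgame (right-multiplying $\phi(P)^2+\phi(I-P)=I$ by $\phi(I-P)$ and then swapping $P\leftrightarrow I-P$) is a minor variant of yours and reaches the same conclusion.
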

\begin{proof} For any $T \in \mathcal{A}$ we have
$$
 \begin{pmatrix}
     T & I \\
     I-T^2 & -T \\
   \end{pmatrix} \in \Gamma  \Rightarrow
    \begin{pmatrix}
     \phi (T) & I \\
     \phi (I-T^2) & \phi (-T) \\
    \end{pmatrix} \in \Gamma $$
    which implies that
    $$ \phi (T)^2+  \phi (I-T^2)=I. \leqno(2.3)$$
    If $T$ is an idempotent, then from (2.3) we obtain
    $$ \phi (T)^2+  \phi (I-T)=I. \leqno(2.4)$$
   If $T$ is an idempotent, then $T(I-T)=0$ which by Lemma 2.3 implies
    $$ \phi (T) \phi (I-T)=0. \leqno(2.5)$$
Multiplying (2.4) from right by $ \phi (I-T)$ and then changing $I-T$ to $T$, we see that
$$ \phi (T)^2= \phi (T).$$
 Therefore the proof is complete.
\end{proof}
\begin{lemma} $\phi$ preserves the orthogonality and the order of idempotent operators.
\end{lemma}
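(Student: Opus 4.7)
The plan is to split the lemma into two independent claims and reduce each one to Lemma 2.3. The orthogonality half is essentially free: if $P,Q\in\mathcal{P}$ satisfy $PQ=QP=0$, then Lemma 2.3 immediately yields $\phi(P)\phi(Q)=\phi(Q)\phi(P)=0$, and Lemma 2.4 tells us that $\phi(P),\phi(Q)$ are still idempotents, so they are orthogonal in $\mathcal{B}$.

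For the order relation, the pivotal step is first to establish the auxiliary identity
\[ \phi(I-P)=I-\phi(P) \quad (P\in\mathcal{P}). \]
I would obtain this by feeding the symmetric matrix
\[ \begin{pmatrix} P & I-P \\ I-P & P \end{pmatrix} \]
through $\phi_2$. A direct computation (using $P^2=P$ and $P(I-P)=(I-P)P=0$) shows that this matrix is in $\Gamma$, so its $\phi_2$-image is too. Reading off the $(1,1)$ entry of the square of $\phi_2(M)$ and using $\phi(I)=I$ from Lemma 2.1 gives $\phi(P)^2+\phi(I-P)^2=I$; since both summands are idempotents by Lemma 2.4, this forces $\phi(P)+\phi(I-P)=I$, which is the desired identity.

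With that identity in hand, the order claim falls out cleanly. Suppose $P<Q$, so that $PQ=QP=P$. Then $P$ and $I-Q$ are orthogonal idempotents, because $P(I-Q)=P-PQ=0=(I-Q)P$. Applying Lemma 2.3 to this pair yields $\phi(P)\phi(I-Q)=\phi(I-Q)\phi(P)=0$, and substituting $\phi(I-Q)=I-\phi(Q)$ rearranges to $\phi(P)\phi(Q)=\phi(Q)\phi(P)=\phi(P)$. Finally, Lemma 2.2 (injectivity) guarantees $\phi(P)\neq\phi(Q)$, so $\phi(P)<\phi(Q)$.

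I expect the only real obstacle to be spotting the right $2\times 2$ involution that delivers $\phi(I-P)=I-\phi(P)$; once that identity is extracted, both halves of the lemma follow from routine applications of Lemmas 2.2--2.4 together with Lemma 2.3, and no further matrix constructions are needed.
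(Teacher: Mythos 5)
Your proposal is correct and follows essentially the same route as the paper: orthogonality preservation from Lemmas 2.3 and 2.4, the auxiliary identity $\phi(I-P)=I-\phi(P)$, and the observation that $P<Q$ is equivalent to $P\perp I-Q$ together with injectivity. The only difference is that the paper obtains $\phi(I-P)=I-\phi(P)$ by specializing its already-established identity (2.3), $\phi(T)^2+\phi(I-T^2)=I$, to an idempotent $T=P$ (so only $\phi(P)^2=\phi(P)$ is needed), whereas you introduce the fresh involution $\bigl(\begin{smallmatrix}P & I-P\\ I-P & P\end{smallmatrix}\bigr)$ and additionally invoke idempotency of $\phi(I-P)$; both derivations are valid, and your appeal to $\phi(I)=I$ at that step is in fact unnecessary.
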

\begin{proof} From Lemma 2.3 and Lemma 2.4 we can conclude that $\phi$ preserves the orthogonality of idempotent operators.
 \par
 By Lemma 2.4, Lemma 2.1 and Equation (2.3) we obtain
   $$ \phi(I-P)=I- \phi (P) \hspace{.4cm} (P \in \mathcal{P}). \leqno(2.6)$$
   Note that $P<Q$ if and only if $P \perp I-Q$. So by (2.6) and previous part we obtain $P<Q \Rightarrow \phi (P)< \phi (Q)$.
  \end{proof}
   \begin{remark} If we assume $ \phi _2$ is an involution preserving map in both directions, then applying a similar argument to $ \phi ^{-1}$ yields converse the same results
   (the above Theorem) because $ \phi$ is a injective map and $ \phi ^{-1}$ has the same property of $\phi$. Hence $\phi$ preserves the orthogonality and the order of idempotent
   operators in both directions.
\end{remark}
\par
  Next assume that $ \phi _2$ is an involution preserving map in both directions.
\begin{lemma} For any $T\in \mathcal{A}$ there exists a complex number $ \lambda _T$ such that $ \phi (T)= \lambda _TATA^{-1}$, where $A:X\longrightarrow Y$ is a bijective bounded linear operator.
\end{lemma}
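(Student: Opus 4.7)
The plan is to descend through the rank structure: pin down $\phi$ first on rank-$1$ idempotents, then on all rank-$1$ operators, and finally on arbitrary $T\in\mathcal{A}$. By Lemmas 2.4--2.5 and Remark 2.6, $\phi$ induces a bijection of the posets of nonzero idempotents of $\mathcal{A}$ and $\mathcal{B}$ preserving both the order $<$ and orthogonality in each direction. Since rank-$1$ idempotents are precisely the atoms of this poset (minimal nonzero elements under $<$), $\phi$ restricts to an orthogonality-preserving bijection between the rank-$1$ idempotents of $\mathcal{A}$ and those of $\mathcal{B}$.

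I would then invoke a classical preserver result of Ovchinnikov--\v{S}emrl type (in the spirit of those used in references $[3,4,6,7,9,10]$) to produce a bounded bijective linear operator $A\colon X\to Y$ with $\phi(P)=APA^{-1}$ for every rank-$1$ idempotent $P\in\mathcal{A}$; the conjugate-linear alternative is ruled out by the involution-preserving hypothesis in the complex case. Next, for a rank-$1$ operator $R=x\otimes f$, I would apply Lemma 2.3 to the pair $(R,P)$ with $P=y\otimes g$ any rank-$1$ idempotent satisfying $f(y)=g(x)=0$ and $g(y)=1$, so that $RP=PR=0$. Then $\phi(R)\phi(P)=\phi(P)\phi(R)=0$, and substituting $\phi(P)=Ay\otimes gA^{-1}$ yields $\phi(R)(Ay)=0$ and $gA^{-1}\phi(R)=0$. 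Varying $(y,g)$ (using $\dim X\geq 3$) pins down $\operatorname{Range}\phi(R)\subseteq\mathbb{C}Ax$ and $\ker\phi(R)\supseteq A(\ker f)$, whence $\phi(R)=\lambda_R ARA^{-1}$ for some scalar $\lambda_R$.

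The remaining extension from rank-$1$ operators to arbitrary $T\in\mathcal{A}$ is the step I expect to be the main obstacle. Setting $\psi(T):=A^{-1}\phi(T)A$ and applying Lemma 2.3 to $T$ together with rank-$1$ operators $S$ satisfying $TS=ST=0$, combined with the rank-$1$ formula for $\phi(S)$ just established, one obtains the inclusions $\ker\psi(T)\supseteq\ker T$ and $\operatorname{Range}\psi(T)\subseteq\overline{\operatorname{Range}T}$; but these alone leave real freedom once $\operatorname{rank}T\geq 2$. To squeeze them down to a single scalar identity $\psi(T)=\lambda_T T$, I would exploit the matrix-involution identity $\bigl(\begin{smallmatrix} T & I \\ I-T^2 & -T\end{smallmatrix}\bigr)\in\Gamma$ from the proof of Lemma 2.4, the homogeneity $\phi(-T)=-\phi(T)$ that one reads off from its $\phi_2$-image, and the rank-$1$ formula applied to the rank-$1$ ``slices'' $PTP=f(Tx)P$ for rank-$1$ idempotents $P=x\otimes f$ with $f(x)=1$. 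Together these data should rigidify $\psi(T)$ as a single scalar multiple of $T$ independent of the testing idempotent, completing the identification $\phi(T)=\lambda_T ATA^{-1}$.
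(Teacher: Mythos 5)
Your first two stages track the paper's own proof reasonably closely: the paper likewise feeds the orthogonality-preserving bijection of idempotents into a known structure theorem (Lemma 3.1 of [10]) to obtain $\phi(P)=APA^{-1}$ or $\phi(P)=AP^{*}A^{-1}$ on idempotents, and your orthogonality argument for rank-one operators is a workable substitute for the paper's handling of rank-one elements. One caution there: the alternative you must genuinely exclude is the transposed form $\phi(P)=AP^{*}A^{-1}$ with $A:X^{\prime}\to Y$, not merely conjugate-linearity of $A$; the paper kills it by exhibiting a concrete involution $\begin{pmatrix} I-x\otimes f & x\otimes g\\ -y\otimes f & I-y\otimes g\end{pmatrix}\in\Gamma$ whose image under the transposed form fails to square to the identity. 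Your one-clause appeal to ``the involution-preserving hypothesis'' is not a proof of this.

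The real gap is the step you yourself flag as the main obstacle: passing from rank one to general $T$. The tools you list cannot close it. The kernel/range inclusions say nothing when $T$ is injective with dense range (for instance $T$ invertible); the identity $\phi(T)^2+\phi(I-T^2)=I$ and the oddness $\phi(-T)=-\phi(T)$ do not couple $\phi(T)$ to $T$ linearly; and the ``slices'' $PTP$ are unusable because $\phi$ is neither additive nor multiplicative, so nothing relates $\phi(PTP)$ to $\phi(P)$ and $\phi(T)$. What the paper uses, and what your outline is missing, is a $2\times 2$ involution that couples $T$ multiplicatively with a rank-one operator $S$, namely $\begin{pmatrix} I-TS & -T\\ STS-2S & ST-I\end{pmatrix}\in\Gamma$. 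Taking $S=x\otimes f$ with $f(Tx)=1$ makes $TS$ and $ST$ rank-one idempotents, on which $\phi$ is already determined, and the $(1,2)$-entry of the squared image gives $Tx\otimes f\,\phi(T)=\phi(T)x\otimes fT$, hence $\phi(T)x\in\mathbb{C}\,Tx$ for every $x\notin\ker T$. The paper then invokes the Bre\v{s}ar--\v{S}emrl theorem [2] on locally linearly dependent operators to conclude $\phi(T)=\lambda_{T}T$, with the rank-one exceptional case of that theorem handled by a separate pair of explicit involutions. Without such a coupling identity and the local-linear-dependence theorem, your plan does not produce the scalar $\lambda_{T}$.
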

\begin{proof}
Lemmas 2.2 and 2.5 imply that $\phi$ is a bijection preserving the orthogonality of idempotents in both directions. It follows from lemma 3.1 in $[10]$ that there exists a bijective bounded linear or (in the complex case) conjugate linear operator $A:X\longrightarrow Y$ such that
$$\phi(P)=APA^{-1} \hspace{.4cm} (P \in \mathcal{P}_{ \mathcal{A}})$$
or a bijective bounded linear or (in the complex case) conjugate linear operator $A:X ^{ \prime}\longrightarrow Y$ such that
$$\phi(P)=AP^*A^{-1} \hspace{.4cm} (P \in \mathcal{P}_{ \mathcal{A}}).$$
We show that the second case can not occur. Assume on the contrary that $\phi(P)=AP^*A^{-1}$ for all $P \in \mathcal{P}_{ \mathcal{A}}$. Let $x,y \in X$ be two arbitrary vector such that are linearly independent. So there exist $f_1,f_2 \in X ^{ \prime}$ such that $f_1(x)=f_2(y)=1$ and $f_1(y)=f_2(x)=0$. If $f=f_1-f_2$ and $g=f_1+f_2$, then we have $f(x)=-f(y)=g(x)=g(y)=1$. Hence we have
$$
    \begin{pmatrix}
     I-x \otimes f & x \otimes g \\
      -y \otimes f & I-y \otimes g \\
    \end{pmatrix} \in \Gamma $$
    which implies that
    $$
    \begin{pmatrix}
     A & 0 \\
     0 & A \\
    \end{pmatrix}
    \begin{pmatrix}
     I-f \otimes x & g \otimes x \\
      -f \otimes y & I-g \otimes y \\
    \end{pmatrix}
    \begin{pmatrix}
     A^{-1} & 0 \\
     0 & A^{-1} \\
    \end{pmatrix} \in \Gamma .$$
    It is clear that this is a contradiction. Therefore, we have $\phi(P)=APA^{-1}$ for all $P \in \mathcal{P}_{ \mathcal{A}}$. It is trivial that without loss of generality, we can suppose that $\phi(P)=P$ for all $P \in \mathcal{P}_{ \mathcal{A}}$. \par For any $T \in \mathcal{A}$ we have
$$
    \begin{pmatrix}
     \phi (T) & I \\
     \phi (I-T^2) & \phi (-T) \\
    \end{pmatrix} \in \Gamma $$
    which implies that
    $$ \phi (-T)=- \phi (T). \leqno(2.7)$$
On the other hand, for any $T,S \in \mathcal{A}$ we have
$$
 \begin{pmatrix}
     I-TS & -T \\
     STS-2S & ST-I \\
   \end{pmatrix} \in \Gamma .$$
   Hence
   $$
 \begin{pmatrix}
     \phi (I-TS) & \phi (-T) \\
     \phi (STS-2S) & \phi (ST-I) \\
   \end{pmatrix} \in \Gamma .$$
   So we obtain
   $$ \phi (I-TS) \phi (-T)+ \phi (-T) \phi (ST-I)=0.$$
   Let $T$ be an operator and $x$ be an arbitrary nonzero vector of $X$ such that $x \not \in \ker T$. So there exists a nonzero functional $f$ such that $f(Tx)=1$. Let $S=x \otimes f$ in previous equation and then using (2.6) and (2.7) yields
   $$Tx \otimes f \phi (T)= \phi (T)x \otimes fT$$
   which implies that $ \phi (T)x$ and $Tx$ are linearly dependent for all $x \in X$ such that $x \not \in \ker T$. So it is clear that $ \phi (T)x$ and $Tx$ are linearly dependent for all $x \in X$. Thus we can conclude from $[2]$ that there exists a complex number $ \lambda _T$ such that
   $$ \phi (T)= \lambda _TT \leqno(2.8)$$ for all operator $T$ such that $T$ is not rank one or there exist $x \in X$ and $f,g \in X ^{ \prime}$ such that $T=x \otimes f$ and $ \phi (T)=x \otimes g$ . \par Let $P$ be a rank one idempotent operator. We have
   $$
 \begin{pmatrix}
     I-2P & P \\
     0 & I \\
   \end{pmatrix} \in \Gamma $$
   which by (2.8) and Lemma 2.1 yields
   $$
 \begin{pmatrix}
     \lambda _{I-2P}(I-2P) & P \\
     0 & I \\
   \end{pmatrix} \in \Gamma .$$
   Therefore we obtain $ \lambda _{I-2P}=1$ and so
   $$ \phi(I-2P)=I-2P \leqno(2.9)$$
   for all rank one idempotent $P$. \par
   By previous descriptions, for any rank one operator $x \otimes f$, we can write $ \phi (x \otimes 2f)= x \otimes h$. We can find $y \in X$ such that $f(y)=1$. Then
   $$
 \begin{pmatrix}
     I &  -x \otimes 2f\\
     0 & I-2y \otimes f \\
   \end{pmatrix} \in \Gamma $$
   which by (2.7) and (2.9) and Lemma 2.1 yields
   $$
 \begin{pmatrix}
     I & -x \otimes h \\
     0 & I-2y \otimes f \\
   \end{pmatrix} \in \Gamma .$$
   Hence we obtain
   $$x \otimes h=h(y)y \otimes f$$
   which implies that $f$ and $h$ are linearly dependent. So for any rank one operator $x \otimes f$ there exists a complex number $ \lambda _{x \otimes f}$ such that
   $$ \phi (x \otimes f)= \lambda _{x \otimes f}x \otimes f.$$
   This together with (2.8) completes the proof.
\end{proof}
\par \vspace{.3cm}
\textbf{Proof of Main Theorem }
 As stated in the proof of Lemma 2.7, without loss of generality, we can suppose that $ \phi (T)= \lambda _TT$ for a complex number $ \lambda _T$. So it is enough to prove that $ \lambda _T=1$ or $  \lambda _T=-1$ for any $T \in \mathcal{A}$. By Equation (2.3) we have
$$ \lambda _{I-T^2}(I-T^2)=I-{ \lambda _T}^2T^2.$$
If $I$ and $T^2$ are linearly independent, then $ \lambda _{I-T^2}={ \lambda _T}^2=1$ which implies that $ \lambda _T=1$ or $  \lambda _T=-1$. Let $ \lambda _T=1$. Let $ \mu $ be a nonzero complex number. For any $S \in \mathcal{A}$ we have
$$
 \begin{pmatrix}
     I- \mu S & \mu I \\
     2S- \mu S^2 & \mu S-I \\
   \end{pmatrix} \in \Gamma .$$
   Hence
   $$
 \begin{pmatrix}
     \phi (I- \mu S) & \phi ( \mu I) \\
     \phi (2S- \mu S^2) & \phi ( \mu S-I) \\
   \end{pmatrix} \in \Gamma .$$
   So we obtain
   $$ \phi (I- \mu S)^2+ \phi ( \mu I) \phi (2S- \mu S^2)=I.$$
   If $I$, $S$, $S^2$, $S^3$ and $S^4$ are linearly independent, then the sets $ \{I, (I- \mu S)^2 \}$ and $ \{I, (2S- \mu S^2)^2 \}$ contain the linearly independent vectors. So by the previous part we obtain
   $$(I- \mu S)^2+ \lambda _{ \mu I}  \mu I(2S- \mu S^2)=I,$$
   which yields
   $$( \mu ^2- \lambda _{ \mu I}  \mu ^2)S^2+(-2 \mu +2 \mu \lambda _{ \mu I})S=0.$$
   Since $S$ and $S^2$ are linearly independent, we obtain $ \lambda _{ \mu I}=1$. \par
   Now let $T^2= \alpha I$ for a nonzero complex number $ \alpha $. By Equation (2.3) we have
   $$(1- \alpha ) \lambda _{(1- \alpha )I}I= I-{ \lambda _T}^2 \alpha I$$
   which by previous part we obtain $1- \alpha=1-{ \lambda _T}^2 \alpha $. Thus $ \lambda _T=1$ or $  \lambda _T=-1$. For another case $  \lambda _T=-1$, we can use the similar discussion. The proof is complete.
\par \vspace{.4cm}{\bf Acknowledgements:} This research is partially
supported by the Research Center in Algebraic Hyperstructures and
Fuzzy Mathematics, University of Mazandaran, Babolsar, Iran.

\bibliographystyle{amsplain}

\end{document}